\numberwithin{equation}{section}
\newtheorem{thm}{Theorem}
\numberwithin{thm}{section}
\newtheorem{dfn}[thm]{Definition}
\newtheorem{conj}[thm]{Conjecture}
\newtheorem{prop}[thm]{Proposition}
\newtheorem{lemma}[thm]{Lemma}
\newtheorem{cor}[thm]{Corollary}
\newtheorem{rmk}[thm]{Remark}
\newtheorem{exm}[thm]{Example}
\author{Hwangrae Lee}
\address{Department of Mathematics, Pohang University of Science and Technology, Korea \\\url{meso@postech.ac.kr}}
\title{The Euclidean Distance Degree \mbox{of Fermat Hypersurfaces}}
\begin{document}

\maketitle

\begin{abstract}
Finding the point in an algebraic variety that is closest to a given point is an optimization problem with many applications. We study the case when the variety is a Fermat hypersurface. Our formula for its Euclidean distance degree is a piecewise polynomial whose pieces are defined by subtle congruence conditons.
\end{abstract}

\section{Introduction} 
Let $X \subset \mathbb{R}^n$ be an real affine algebraic variety, i.e.~$X$ is the common zero set of some polynomials $f_1,\dots,f_m \in \mathbb{R}[x_1,\dots,x_n]$. We consider the following problem: given $u \in \mathbb{R}^n$, compute $u^* \in X$ that minimizes the squared Euclidean distance $d_u{x} = \sum_{i=1}^n (u_i-x_i)^2$ from the given point $u$. This problem arises from best approximation problems. Once we have a mathematical model $X$ to be satisfied by a data $u$ obtained by, for example, an experiment or reception from someone's transmission, usually $u$ contains some error and hence we want to correct it. The nearest point $u^*$ in $X$ to $u$ represents the original data suggested by $u$.

In order to find $u^*$ algebraically, we consider the zeroes in $\mathbb{C}^n$ of the equations defining $X$, and we examine all {\em complex} critical points of the squared distance function $d_u(x) = \sum_{i=1}^n (u_i-x_i)^2 $ on $X \setminus X_{sing}$ where $X_{sing}$ is the singular locus of $X$. If $X$ has some singular locus, then there could be infinitely many critical points of $d_u(x)$ on $X$. Thus we remove the singular locus of $X$. The number of such critical points is finite and constant on a dense open subset of data $u\in \mathbb{R}^n$. That number of critical points was studied by J.Draisma et al \cite{DHOST}. It is called the Euclidean distance degree (ED-degree) of the variety $X$, and denoted as $EDdeg(X)$. From now on, all the objects will be considered as complex varieties, except in Section~2.3.

Sometimes, $X$ is given by homogeneous polynomials. The set of $m_1$ by $m_2$ matrices of rank at most $k$ is a typical example. Such a variety is called a {\em projective} algebraic variety in $\mathbb{P}^n(\mathbb{C})$. For the definition of $\mathbb{P}^n(\mathbb{C})$ and more informations, see Chapter 8 of the book by Cox, Little, and O'Shea \cite{CLO}. For a projective $X \subset \mathbb{P}^n(\mathbb{C})$, we define $EDdeg(X)$ to be the ED-degree of the affine cone of $X$ in $\mathbb{C}^{n+1}$. That is, just regard $X$ as an affine variety and compute the ED-degree. The ED-degrees of determinantal varieties as above have been studied by G.Ottaviani et al \cite{OSS}.

This paper is motivated by following general upper bound on the ED-degree.
\begin{prop} \cite[Corollary 2.9]{DHOST} \label{gbound}
Let $X$ be a hypersurface in $\mathbb{P}^n(\mathbb{C})$ defined by a homogeneous polynomial $f$ of degree $d$. Then
$$
EDdeg(X) \leq d\sum_{i=0}^{n-1} (d-1)^i.
$$
and equality holds when $f$ is generic.
\end{prop}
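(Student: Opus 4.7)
My plan is to combine two ingredients: the identity expressing $EDdeg$ as a sum of polar degrees, and an intersection-theoretic computation of these polar degrees for a smooth hypersurface.

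First, I would invoke the general identity $EDdeg(X) = \sum_{i=0}^{\dim X} \delta_i(X)$ (a theorem in \cite{DHOST}), valid whenever the projective closure of $X$ does not contain the isotropic quadric $Q = \{x_0^2+\cdots+x_n^2 = 0\}$. Here $\delta_i(X)$ denotes the $i$-th polar degree, defined via generic-flag intersections of the conormal variety $N_X \subset \mathbb{P}^n \times (\mathbb{P}^n)^*$. A generic hypersurface of degree $d$ does not contain $Q$, so the identity applies.

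Second, for a smooth hypersurface $X = V(f)$ of degree $d$, the conormal variety is the graph of the Gauss map $\gamma: X \to (\mathbb{P}^n)^*$, $x \mapsto [\partial_0 f(x):\cdots:\partial_n f(x)]$. Using the adjunction sequence $0 \to T_X \to T_{\mathbb{P}^n}|_X \to \mathcal{O}_X(d) \to 0$ to compute the Chern classes of $T_X$, and then intersecting $N_X$ with generic flags of complementary dimension in each of the two factors, I would show that $\delta_i(X) = d(d-1)^i$ for $i = 0, \ldots, n-1$. Summing then yields the equality $EDdeg(X) = d\sum_{i=0}^{n-1}(d-1)^i$ for generic $f$. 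To pass from equality in the generic case to the upper bound for arbitrary (possibly singular) $f$, I would appeal to semicontinuity of $EDdeg$ in flat families (also proved in \cite{DHOST}): since generic $f$ already attains $d\sum (d-1)^i$, every $f$ of degree $d$ must satisfy $EDdeg(V(f)) \leq d\sum_{i=0}^{n-1}(d-1)^i$.

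The main technical obstacle is the polar-degree calculation in the second step, namely verifying $\delta_i(X) = d(d-1)^i$. This comes down to a Chern-class manipulation on $X$ which, while standard, is combinatorially a little involved. Useful sanity checks are the extremal cases $\delta_0(X) = \deg X = d$ and $\delta_{n-1}(X) = \deg X^{*} = d(d-1)^{n-1}$, the classical Pl\"ucker--Teissier degree of the dual hypersurface; any correct intermediate formula must interpolate between these.
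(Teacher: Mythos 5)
The paper does not actually prove Proposition \ref{gbound}: it is imported from \cite{DHOST} without proof, and the closest thing to a proof in this paper is the B\'{e}zout-style computation of Lemmas \ref{mul}--\ref{degree}, which adapts exactly that counting scheme to the Fermat case. Your route through polar degrees is the genuinely different approach (Section 5 of \cite{DHOST} rather than its Section 2), and the generic-equality half is essentially sound, with one correction: the identity $EDdeg(X)=\sum_i \delta_i(X)$ does not hold merely when $X$ fails to \emph{contain} the isotropic quadric $Q$; it requires $X$ to meet $Q$ \emph{transversally}. For instance the circle $\{x_0^2+x_1^2-x_2^2=0\}\subset\mathbb{P}^2$ contains no part of $Q$, yet it is tangent to $Q$ at the isotropic points $(1:\pm i:0)$ and has ED-degree $2$, while $\delta_0+\delta_1=4$. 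Since a generic degree-$d$ hypersurface is transversal to $Q$ by Bertini, and $\delta_i = d(d-1)^i$ for a smooth hypersurface is classical (your sanity checks $\delta_0=d$ and $\delta_{n-1}=d(d-1)^{n-1}$ are the right ones), the equality statement for generic $f$ survives once the hypothesis is corrected.

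The genuine gap is your final step. \cite{DHOST} proves no semicontinuity theorem for the ED-degree in flat families, and the assertion that every member of a family has ED-degree at most that of the generic member is not a known general principle: the critical equations do not form a square system whose isolated-solution count is automatically bounded under specialization, and a special $f$ could a priori acquire isolated critical points out of intersections that are excess or degenerate for generic $f$; nothing in your argument rules this out. (If such semicontinuity came for free, the inequality half of most ED-degree bounds would be automatic, which it is not.) To close the gap you need an argument valid for every $f$ with finite critical locus. One option is the count that actually underlies \cite[Corollary 2.9]{DHOST}: the critical scheme lies in $\{f=0\}$ intersected with the determinantal locus of $2\times 2$ minors of the matrix with rows $(\partial_0 f,\dots,\partial_n f)$ and $(x_0-u_0t,\dots,x_n-u_nt)$, and one subtracts the contributions of the cone point and of the hyperplane $t=0$, precisely as in Lemmas \ref{mul}--\ref{degree} of this paper; discarding those contributions only decreases the count, which is what yields an inequality for arbitrary $f$. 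Alternatively, to stay within the polar-class framework, you would have to prove both that $EDdeg(X)\le \sum_i \delta_i(X)$ holds unconditionally and that the polar degrees of an arbitrary degree-$d$ hypersurface are bounded by those of a smooth one; neither statement is supplied by what you wrote.
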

In this paper, we focus on Fermat hypersufaces and their variations.
\begin{dfn}
\begin{itemize}
\item[]
\item A Fermat hypersurface of degree $d$ in $\mathbb{P}^n(\mathbb{C})$, denoted by $F_{n,d}$ is the projective variety defined by the polynomial $x_0^d + \cdots + x_n^d$.
\item An affine Fermat hypersurface of degree $d$ in $\mathbb{C}^n$, denoted by $AF_{n,d}$ is the affine variety defined by the polynomial $x_1^d + \cdots + x_n^d - 1$.
\item A scaled Fermat hypersurface of degree $d$ in $\mathbb{P}^n(\mathbb{C})$ with scaling vector $a=(a_0,\dots,a_n) \in (\mathbb{C}^*)^{n+1}$, denoted by $SF_{n,d}^a$ is the projective variety defined by the polynomial $x_0^d/{a_0} + \cdots + x_n^d/{a_n}$.
\end{itemize}
\end{dfn}
In statistical optimization, maximum likelihood estimation (MLE) is an important tool. The generic number of the critical points of maximum likelihood function, called ML-degree, is a parallel concept to ED-degree. The ML-degrees of many statistically relevant varieties have been computed \cite{SH}. Recently, in particular, the ML-degree of $F_{n,d}$ is partially given by D.Agostini et al \cite{mldeg}. Their results, which we review in Example \ref{curve}, serve as motivation our study of the ED-degree of $F_{n,d}$.

This paper is organized as follows. In Section 2, we will investigate the sharpness of the general bound (Proposition \ref{gbound}) for the Fermat hypersurfaces. We give a formula for the ED-degree of $F_{n,d}$ (Theorem \ref{main}), and gives an explicit formula for $n\leq 3$ (Remark \ref{delta3}, Example \ref{curve}). If we fix $n$ and consider the general bound as a function in $d$, it is the best possible polynomial bound (Lemma \ref{sharp}), while the gap can be arbitrary large (Remark \ref{delta3}). The main theorem can be used for an efficient algorithm which computes the ED-degree of Fermat hypersurfaces numerically (Example \ref{m2}). The proof of Theorem \ref{main} can be used similarly to evaluate the ED-degree for an {\em affine} Fermat hypersurface $AF_{n,d}$ (Corollary \ref{affcase}). After that, an open problem (Conjecture \ref{con}) about {\em real} Fermat hypersurfaces will be discussed.

In Section 3, we will consider the scaled Fermat hypersurfaces for fixed $n$ and $d$. We introduce the {\em exponential cyclotomic polynomial} $Q_{m,p}$ which has a special role for the scaling vector $a$ of $SF_{n,d}^a$ (Theorem \ref{resultant}). As a corollary,  we will see that the ED-degree of scaled Fermat hypersurface usually achieves the general bound.

\section*{Acknowledgements}
The author would like to thank his advisors Bernd Sturmfels and Hyungju Park for their guidance, comments and support, and Donghoon Hyeon for useful conversations and suggestions. Discussions about Section 3 with Donggeon Yhee were also helpful. The author was partially supported by POSTECH-IBS and NIMS-CAMP.

\section{ED-degree for Fermat hypersurfaces}
\subsection{Main theorem for Fermat hypersurfaces}
In this section, we compute the ED-degree of $F_{n,d}$ for each $n,d$.
\begin{dfn}
For a positive integer $p$, fix a $p$-th primitive root of unity $\zeta$. Define $\delta(m,p)$ to be the number of integer $m$-tuples $(t_1,\dots,t_m)$, $1 \leq t_i \leq p$, satisfying 
$$1+\sum_{i=1}^m \zeta^{2t_i} = 0.$$
Note that it does not depend the choice of $\zeta$.
\end{dfn}
\begin{thm} \label{main}
The ED-degree of the Fermat hypersurface $F_{n,d}$ is given by
$$EDdeg(F_{n,d}) = d\sum_{i=0}^{n-1} (d-1)^i - \sum_{m=1}^{n} \binom{n+1}{m+1} \cdot \delta(m,d-2)$$
\end{thm}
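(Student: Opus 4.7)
The plan is to express $EDdeg(F_{n,d})$ as the generic upper bound of Proposition \ref{gbound} minus a correction that measures the failure of transversality between $F_{n,d}$ and the isotropic quadric $Q = V(x_0^2+\cdots+x_n^2) \subset \mathbb{P}^n$. First I would write the Lagrange critical system for the affine cone of $F_{n,d}$: a point $x$ is critical for $d_u$ iff there exists a multiplier $\lambda$ with
$$u_i = x_i + \lambda\, d\, x_i^{d-1}\quad(i=0,\ldots,n)\quad\text{and}\quad \sum_{i=0}^n x_i^d=0.$$
Multiplying the $i$-th Lagrange equation by $x_i$, summing, and using $f(x)=0$ yields the identity $\sum u_i x_i = \sum x_i^2$, indicating that limiting critical points must flow onto $Q$ as $u$ degenerates.

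Next I would invoke the framework of \cite{DHOST}: for a smooth projective hypersurface $X$, the ED-degree attains the generic bound precisely when $X$ meets $Q$ transversally. Since $F_{n,d}$ itself is smooth for every $d\ge 1$, the entire defect is concentrated at points where $F_{n,d}\cap Q$ fails to be transverse. At such a point the gradients $\nabla(\sum x_i^d)=(dx_i^{d-1})$ and $\nabla(\sum x_i^2)=(2x_i)$ are proportional, which forces $x_i^{d-2}$ to equal a common constant $c$ whenever $x_i\ne 0$. On a support of size $m+1$, rescaling so the first nonzero coordinate is $1$, the remaining nonzero entries are $\zeta^{t_j}$ with $\zeta$ a primitive $(d-2)$-th root of unity and $1\le t_j\le d-2$. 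The condition $\sum x_i^2=0$ then reads $1+\sum_{j=1}^m \zeta^{2t_j}=0$, which is precisely the defining equation of $\delta(m,d-2)$. Choosing the $(m+1)$-element support in $\binom{n+1}{m+1}$ ways and summing over $m\ge 1$ recovers the correction term $\sum_{m=1}^n\binom{n+1}{m+1}\delta(m,d-2)$.

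The main obstacle is proving that each non-transversal point of $F_{n,d}\cap Q$ contributes exactly $1$ to the ED-degree defect. I would address this by a local intersection-multiplicity computation at each bad point, verifying that the tangency between $F_{n,d}$ and $Q$ has the simplest possible type: the Hessian of $\sum x_i^d - \mu\sum x_i^2$ at the point (for the unique $\mu$ annihilating first-order data) is diagonal with controlled kernel, and the uniform $d$-th-power structure of the Fermat polynomial makes the local excess-intersection calculation tractable. Once this multiplicity-one statement is established, summing the contributions over all non-transversal points produces the claimed formula.
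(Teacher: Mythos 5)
Your identification of the defect locus is correct, and it matches the paper's computation in different clothing: the tangency points of $F_{n,d}$ with the isotropic quadric $Q$ that you describe (support of size $m+1$, first nonzero coordinate scaled to $1$, remaining entries $(d-2)$-nd roots of unity $\zeta^{t_j}$ with $1+\sum_j\zeta^{2t_j}=0$) are exactly the solutions of the system \eqref{sys3} counted in Lemma \ref{e-d} — indeed, $c_i^{d-2}=1$ forces $c_i^d=c_i^2$, so the Fermat equation at these points \emph{is} the equation of $Q$, and your count $\sum_{m=1}^n\binom{n+1}{m+1}\delta(m,d-2)$ agrees with the paper's $\epsilon(n,d)$. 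So the geometric picture is right, and your route differs genuinely from the paper's: the paper never mentions $Q$, but instead homogenizes the critical equations into \eqref{sys2} and does Bézout bookkeeping, via the decomposition \eqref{edstr} into the total degree $d\sum_{i=0}^{n}(d-1)^i$ (Lemma \ref{degree}), the multiplicity $d(d-1)^n$ of the cone point (Lemma \ref{mul}) — a term your framework silently absorbs into the generic bound — and the solutions at infinity $\epsilon(n,d)$.

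The genuine gap is the step you yourself flag as the ``main obstacle,'' and it is not a technicality: nothing you cite from \cite{DHOST} gives a formula for the \emph{defect} when transversality with $Q$ fails. That paper yields the generic bound and its attainment in the transversal case, but the quantitative statement that each non-transversal point of $F_{n,d}\cap Q$ lowers the ED-degree by exactly $1$ does not follow; a priori a single tangency point could absorb several critical points in the limit, and translating your proposed Hessian/contact-type computation into an ED-degree defect would require an excess-intersection or Euler-characteristic theorem that you would have to build from scratch — this is essentially the whole content of the theorem beyond the generic bound. Note how the paper discharges precisely this burden: its Lemma \ref{trans} shows that the homogenized system \eqref{sys2} meets the hyperplane $t=0$ transversally, so each point at infinity counts exactly once, and the proof is a one-line derivative computation ($-u_1+c_1^{d-1}u_0\neq 0$ because $|c_1|=1$ and $u$ is generic) that crucially exploits the genericity of the data $u$. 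Your tangency analysis, by contrast, takes place on $F_{n,d}\cap Q$, which is independent of $u$, so no such genericity is available there. The cleanest repair of your proposal is to adopt the paper's device: introduce the homogenizing variable $t$, reinterpret your tangency points as the solutions of \eqref{sys2} at $t=0$, and prove the multiplicity-one claim as transversality to $\{t=0\}$ for generic $u$, rather than attempting to classify the contact of $F_{n,d}$ with $Q$.
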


\begin{rmk} \label{delta3}
For small $m$, the following are derived easily from the definition:

\begin{enumerate}[(i)]
\item $$\delta(1,p) =
\left\{ \begin{array}{ll}
2 & \text{if } p \equiv 0 \text{ mod } 4\\
0 & otherwise
\end{array}
\right.
$$
\item $$
\delta(2,p) =
\left\{ \begin{array}{ll}
8 & \text{if } p \equiv 0 \text{ mod } 6\\
2 & \text{if } p \equiv 3 \text{ mod } 6\\
0 & otherwise
\end{array}
\right.
$$
\item $$\delta(3,p) =
\left\{ \begin{array}{ll}
12p-24 & \text{if } p \equiv 0 \text{ mod } 4\\
0 & otherwise
\end{array}
\right.
$$
\end{enumerate}
In particular, (iii) implies that the difference between the general bound and ED-degree can be arbitrary large. Although, following lemma shows that the general bound is the best possible polynomial bound.
\end{rmk}

\begin{lemma} \label{sharp}
If p is a prime bigger than $m+1$, then $\delta(m,p)=0$.
\end{lemma}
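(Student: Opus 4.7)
The plan is to translate the equation $1 + \sum \zeta^{2t_i} = 0$ into a statement about a polynomial with integer coefficients vanishing at $\zeta$, and then exploit the fact that when $p$ is prime, the minimal polynomial of $\zeta$ over $\mathbb{Q}$ is the very simple cyclotomic polynomial $\Phi_p(x) = 1 + x + x^2 + \cdots + x^{p-1}$.

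First I would note that since $p > m+1 \geq 2$ is prime, $p$ must be odd, so $\gcd(2,p)=1$ and the map $t \mapsto 2t \bmod p$ is a bijection on $\{1,\dots,p\}$. Hence $\zeta^{2t_i}$ ranges over all $p$-th roots of unity as $t_i$ ranges over $\{1,\dots,p\}$, and for any tuple $(t_1,\ldots,t_m)$ I can record the multiplicities $n_j = \#\{i : \zeta^{2t_i} = \zeta^j\}$ for $0 \le j \le p-1$, with $\sum_j n_j = m$.

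Next I would rewrite the condition $1 + \sum_i \zeta^{2t_i} = 0$ as $(n_0+1) + n_1\zeta + \cdots + n_{p-1}\zeta^{p-1} = 0$. The polynomial
$$P(x) = (n_0+1) + n_1 x + \cdots + n_{p-1} x^{p-1} \in \mathbb{Z}[x]$$
has $\zeta$ as a root and degree at most $p-1$, so it must be an integer multiple of the minimal polynomial $\Phi_p(x)$. Thus $P(x) = c \Phi_p(x)$ for some $c \in \mathbb{Z}$, which forces $n_0 = c-1$ and $n_1 = \cdots = n_{p-1} = c$. Nonnegativity of the $n_j$ requires $c \ge 1$, and summing gives $m = (c-1) + (p-1)c = pc - 1 \ge p - 1$, contradicting the hypothesis $p > m+1$. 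Hence no such tuple exists and $\delta(m,p) = 0$.

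The only real step is recognizing that the vanishing condition becomes a polynomial equation of degree less than $p$ at $\zeta$, after which primality of $p$ (which makes $\Phi_p$ have the explicit all-ones form) does all the work. I do not anticipate a genuine obstacle; the argument is a standard application of the minimal polynomial of a prime-order root of unity.
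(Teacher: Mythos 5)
Your proof is correct and follows essentially the same route as the paper: reduce the exponents modulo $p$ to get a polynomial of degree less than $p$ vanishing at $\zeta$, use primality to conclude it is a scalar multiple of $\Phi_p(x)=1+x+\cdots+x^{p-1}$, and count to force $m+1\geq p$. Your version is in fact a bit more careful than the paper's (which just says the multiple of $\Phi_p$ ``has $p$ terms''), since you track the multiplicities $n_j$ explicitly and use their nonnegativity to pin down $c\geq 1$.
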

\begin{proof}
Assume $1+\sum_{i=1}^m \zeta^{2t_i} = 0$. Replacing $2t_i$ by $2t_i-p$ if $2t_i \geq p$ for each $i$, we have a polynomial in $\zeta$ whose degree is less than $p$. Since $p$ is a prime, it should be a scalar multiple of the cyclotomic polynomial $\Phi_p(\zeta):=1+\zeta+\cdots+\zeta^{p-1}$. It has $p$ terms, hence $m+1\geq p$. 
\end{proof}

No closed formula for $\delta(m,p)$ is known, but it has been studied in both algebraic geometry and number theory \cite{betti,Lau,Mann}. In particular, Theorem 2 in \cite{Lau} implies that $\delta(m,p)$ is a polynomial periodic function in $p$. 

\begin{cor}
For fixed $n$, the ED-degree of $F_{n,d}$ is a polynomial periodic function in $d$.
\end{cor}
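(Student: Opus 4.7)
The plan is to combine Theorem \ref{main} with the fact, cited from \cite{Lau}, that for each fixed $m$ the function $p \mapsto \delta(m,p)$ is polynomial periodic. Recall that a function $f:\mathbb{Z}\to\mathbb{C}$ is polynomial periodic with period $P$ if there exist polynomials $g_0,\ldots,g_{P-1}$ such that $f(k)=g_{k\bmod P}(k)$; equivalently, $f$ agrees with a polynomial on each residue class modulo $P$. Ordinary polynomials are the case $P=1$, and the class of polynomial periodic functions is closed under scalar multiplication, translation of the argument, and finite sums (a common period being the least common multiple of the individual periods).

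With that vocabulary, the corollary is essentially a direct reading of the formula in Theorem \ref{main}. The first summand $d\sum_{i=0}^{n-1}(d-1)^i$ is a polynomial in $d$ (of degree $n$), hence polynomial periodic with period $1$. In the second summand, $n$ is fixed so each $\binom{n+1}{m+1}$ is a constant scalar; each $\delta(m,d-2)$ is polynomial periodic in $d-2$ by \cite{Lau}, and hence polynomial periodic in $d$, with some period $P_m$. I would then set $P:=\mathrm{lcm}(P_1,\ldots,P_n)$ and observe that on each residue class of $d$ modulo $P$, every summand on the right-hand side of Theorem \ref{main} is a polynomial in $d$, so their signed combination is a polynomial in $d$ on that class.

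I do not anticipate a real obstacle: the entire mathematical content is packaged into Lau's theorem and the explicit formula of Theorem \ref{main}. The only thing to verify is the trivial closure of polynomial periodic functions under shifts, scalar multiples, and finite sums, which is immediate from the definition. No additional case analysis in the residue of $d$ modulo $P$ is needed, as the polynomial on each class is produced automatically by combining the polynomials given on that class by the individual summands.
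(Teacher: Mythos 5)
Your proposal is correct and matches the paper's (implicit) argument exactly: the paper derives the corollary directly from Theorem \ref{main} together with the remark that Theorem 2 of \cite{Lau} makes $\delta(m,p)$ polynomial periodic in $p$, exactly as you do. Your explicit verification of closure under shifts, scalar multiples, and finite sums (taking the period to be the least common multiple) simply spells out the routine details the paper leaves unstated.
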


\begin{exm} \label{curve}
In \cite{mldeg}, the ML-degree of the Fermat curves $(n=2)$ is given by
$$
MLdeg(F_{2,d}) =
\left\{ \begin{array}{ll}
d^2+d & \text{if } d \equiv 0,2 \text{ mod } 6\\
d^2+d-3 & \text{if } d \equiv 3,5 \text{ mod } 6\\
d^2+d-2 & \text{if } d \equiv 4 \text{ mod } 6\\
d^2+d-5 & \text{if } d \equiv 1 \text{ mod } 6.
\end{array}
\right.
$$
By the Theorem \ref{main}, we have
$$
EDdeg(F_{2,d})=
\left\{ \begin{array}{ll}
d^2 & \text{if } d \equiv 0,1,3,4,7,9 \text{ mod } 12\\
d^2-2 & \text{if } d \equiv 5,11 \text{ mod } 12\\
d^2-6 & \text{if } d \equiv 6,10 \text{ mod } 12\\
d^2-8 & \text{if } d \equiv 8 \text{ mod } 12\\
d^2-14 & \text{if } d \equiv 2 \text{ mod } 12.
\end{array}
\right.
$$
It is a polynomial periodic function in $d$, and the general bound $EDdeg(F_{2,d}) \leq d^2$ is the best possible polynomial bound. Comparing with $MLdeg$, both are periodic while their periods are different. 

\end{exm}

The system for critical points of the distance function is given by 
\begin{equation} \label{sys1}
\left\{ \begin{array}{ll}
x_0^d + \cdots +x_n^d = 0\\
x_i^{d-1}(x_j - u_j) = x_j^{d-1}(x_i - u_i) \text{ for each }i\neq j
\end{array} \right.
\end{equation}
where the vector $u = (u_0, \dots, u_n) \in \mathbb{C}^n$ is sufficiently generic.
The ED-degree is the number of solutions of \eqref{sys1} except $(0,\dots,0)$, which is a (unique) singular point of the cone over the Fermat hypersurface $F_{n,d}$.

Introducing a new variable $t$, we modify the system \eqref{sys1} into following homogeneous system in $S[t] = \mathbb{C}[x_0,\dots,x_n,t]$.
\begin{equation} \label{sys2}
\left\{ \begin{array}{ll}
x_0^d + \cdots +x_n^d = 0\\
x_i^{d-1}(x_j - u_jt) = x_j^{d-1}(x_i - u_it) \text{ for each }i\neq j
\end{array} \right.
\end{equation}
Each solution of \eqref{sys2} of the form $(c_0 : \dots : c_n : 1)$ corresponds to the solution $(c_0, \dots, c_n)$ of \eqref{sys1}.
The system \eqref{sys2} has more solutions that we don't want to count.
Let $mult(0)$ be the multiplicity of $(0:\dots:0:1)$ for the system \eqref{sys2}, and $\epsilon(n,d)$ be the number of solutions of the form $(a_0:\dots:a_n:0)$ counting multiplicities. Then the ED-degree of $F_{n,d}$ is given by
\begin{equation} \label{edstr}
EDdeg(F_{n,d}) = deg(\eqref{sys2}) - mult(0) - \epsilon(n,d)
\end{equation}
where $deg(\eqref{sys2})$ is the degree of the projective scheme defined by the system \eqref{sys2}. Now, Theorem \ref{main} is just a consequence of following lemmas.

\begin{lemma} \label{mul}
The multiplicity of $(0:\dots:0:1)$ for the system \eqref{sys2}, denoted by $mult(0)$, is $d(d-1)^n$.
\end{lemma}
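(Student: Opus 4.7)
The plan is to pass to the affine chart $t=1$, where $mult(0)$ becomes the length of the local ring $\mathcal{O}_{\mathbb{A}^{n+1},0}/(f_0,\,g_{ij}:i\neq j)$, with $f_0=\sum_i x_i^d$ and $g_{ij}=x_i^{d-1}(x_j-u_j)-x_j^{d-1}(x_i-u_i)$. First I would reduce the generating set via the identity
\[
x_0^{d-1}\,g_{ij}=x_i^{d-1}\,g_{0j}-x_j^{d-1}\,g_{0i};
\]
since setting $x_0=0$ turns $g_{0i}$ into $u_0 x_i^{d-1}$, the locus $V(g_{0i})\cap\{x_0=0\}$ is just the origin, so $x_0$ is a non-zero-divisor modulo $(g_{01},\ldots,g_{0n})$. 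The identity above then forces $(g_{ij}:i\neq j)=(g_{0i}:1\le i\le n)$, cutting the ideal down to $n$ equations in $\mathbb{A}^{n+1}$.

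Next I would parametrize the branches of the resulting curve $C:=V(g_{01},\ldots,g_{0n})$ at the origin. The equations $g_{0i}=0$ express the proportionality $u-x\parallel\nabla f(x)$, so introducing a Lagrange parameter $\mu$ they become $x_i^{d-1}+\mu x_i-\mu u_i=0$. The blow-up substitution $x_i=s z_i$ with $s=\mu^{1/(d-1)}$ transforms this into $z_i^{d-1}+s z_i-u_i=0$, which at $s=0$ has, for each $i$, the $d-1$ simple roots $z_i(0)=\zeta_i u_i^{1/(d-1)}$ with $\zeta_i^{d-1}=1$. By Hensel's lemma each of the $(d-1)^{n+1}$ choices $(\zeta_0,\ldots,\zeta_n)$ lifts to a unique analytic branch of $C$ through the origin, smooth in $s$; the reparametrization $s\mapsto\omega s$ with $\omega^{d-1}=1$ collapses these in orbits of size $d-1$, so $C$ has exactly $(d-1)^n$ distinct smooth branches. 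Being a one-dimensional complete intersection with smooth branches, $C$ is Cohen--Macaulay and generically reduced, hence reduced.

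On each branch the relation $z_i^{d-1}=u_i-s z_i$ gives $x_i^d=s^d z_i(u_i-s z_i)$, so
\[
f_0(x(s))=s^d\sum_i u_i z_i(s)-s^{d+1}\sum_i z_i(s)^2=s^d\sum_i\zeta_i u_i^{d/(d-1)}+O(s^{d+1}).
\]
The leading coefficient is not identically zero in $u$ (check $u=(1,\ldots,1)$ with all $\zeta_i=1$), so for generic $u$ each branch contributes order $d$ to $f_0$. Applying the standard length formula for a one-dimensional reduced local ring,
\[
\mathrm{length}_{\mathcal{O}_{C,0}}\bigl(\mathcal{O}_{C,0}/f_0\bigr)=\sum_{\text{branches }B} v_B(f_0)
\]
(proved by the snake lemma on $0\to\mathcal{O}_{C,0}\to\widetilde{\mathcal{O}}_{C,0}\to\widetilde{\mathcal{O}}_{C,0}/\mathcal{O}_{C,0}\to 0$ and multiplication by $f_0$), we conclude $mult(0)=d(d-1)^n$.

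The step I expect to be the main obstacle is verifying reducedness of $C$ near the origin: this is what legitimizes decomposing the length as a sum of branch orders. Once reducedness is established, the branch count and the per-branch order calculation are routine Hensel-lemma applications.
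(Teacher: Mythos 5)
Your overall strategy is sound and genuinely different from the paper's. The paper never leaves the local ring at the point: since the $u_i$ are nonzero, each factor $x_i-u_it$ is a unit at $(0:\dots:0:1)$, so the localized ideal is generated by $\sum_i x_i^d$ together with $x_0^{d-1}=\mu_i x_i^{d-1}$ for units $\mu_i$, and the length is read off in one stroke as the size of the monomial basis $\{x_0^{\alpha_0}\cdots x_n^{\alpha_n}\mid \alpha_0\le d-1,\ \alpha_i\le d-2\}$ (or, replacing the $\mu_i$ by nonzero scalars, by B\'ezout). You instead resolve the critical curve $C$ into analytic branches and sum valuations of $f_0$ via the normalization sequence; your $(d-1)^n$ branches each contributing order $d$ repackage the same count and explain it geometrically, at the cost of more machinery. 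Incidentally, your nonzerodivisor detour for $(g_{ij})=(g_{0i})$ is unnecessary: in the local ring the factors $x_j-u_j$ are units, so each $g_{ij}$ lies in $(g_{0i},g_{0j})$ simply by dividing --- this is exactly the simplification the paper exploits. (As written, your detour also quietly uses unmixedness of the complete intersection to pass from $V(g_{0\bullet})\cap\{x_0=0\}=\{0\}$ to the nonzerodivisor property; that is supplied by the same set-theoretic computation, which forces the local dimension to be one, but it deserves a sentence.)

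There is, however, one genuine gap, exactly at the step you flagged: \emph{smoothness of the branches does not imply generic reducedness}. A one-dimensional complete intersection can be non-reduced along a perfectly smooth branch --- $V(x^2)\subset\mathbb{A}^2$ is the standard counterexample --- so ``complete intersection with smooth branches, hence generically reduced, hence reduced'' is a non sequitur: your Hensel analysis parametrizes the branches of $C_{\mathrm{red}}$ but says nothing about the scheme structure of $C$ transverse to them. Fortunately the gap closes with a direct Jacobian check. One has $\partial g_{0i}/\partial x_j=0$ for $j\neq 0,i$, while along a branch $x=s\,z(s)$,
$$
\frac{\partial g_{0i}}{\partial x_i} \;=\; x_0^{d-1}-(d-1)x_i^{d-2}(x_0-u_0) \;=\; s^{d-2}\left(s\,z_0^{d-1}-(d-1)z_i^{d-2}(s z_0-u_0)\right),
$$
and at $s=0$ the bracketed factor equals $(d-1)u_0 z_i^{d-2}\neq 0$. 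So for small $s\neq 0$ the Jacobian of $(g_{01},\dots,g_{0n})$ in the variables $(x_1,\dots,x_n)$ is diagonal with nonzero entries; hence $C$ is smooth, in particular reduced, at every point of each branch near but off the origin. Combined with Cohen--Macaulayness this gives $(R_0)$ and $(S_1)$, i.e.\ reducedness, and your length formula then applies. Two smaller loose ends: the nonvanishing of the leading coefficient must be checked for \emph{every} tuple $(\zeta_0,\dots,\zeta_n)$, not just the all-ones tuple at $u=(1,\dots,1)$ --- immediate, since each $\sum_i\zeta_i u_i^{d/(d-1)}$ is nonconstant in $u_0$ --- and you should record that your substitution captures \emph{all} branches: on any branch, $\mathrm{ord}(\mu)=(d-1)\,\mathrm{ord}(x_i)$ for every $i$, so $\mu$ admits an analytic $(d-1)$-st root in the branch parameter, reducing any branch to your Hensel normal form.
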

\begin{proof}
Let $I$ be the ideal in $S[t]$ generated by equations in \eqref{sys2} and $\mathfrak{m} = (x_0,\dots, x_n)$ be the ideal corresponding the point $(0:\dots:0:1)$. Then $mult(0)$ is defined by the length of $S[t]_\mathfrak{m}/I_\mathfrak{m}$ as an $S[t]_\mathfrak{m}$-module. In the local ring $S[t]_\mathfrak{m}$, the factor $(x_i-u_it)$ is a unit. (By the genericity of $u_i$, we may assume $u_i \neq 0$ for all $i$.) Writing $\mu_i = (x_0-u_0t)\cdot(x_i-u_it)^{-1}$, the localized ideal $I_\mathfrak{m}$ is generated by
\begin{equation} \label{mult}
\left\{ \begin{array}{ll}
x_0^d + \cdots +x_n^d = 0\\
x_0^{d-1} = x_i^{d-1}\mu_i \text{ for each }i.
\end{array} \right.
\end{equation}
Here, the length of $S[t]_\mathfrak{m}/I_\mathfrak{m}$ is just the maximum size of a monomial set in $S$ which are independent modulo $I_\mathfrak{m}$. By direct counting, we see that $mult(0) = |\{x_0^{\alpha_0} \cdot x_n^{\alpha_n} \mid 0\leq \alpha_0 \leq d-1, 0\leq \alpha_1,\dots,\alpha_n \leq d-2 \}| = d(d-1)^n$. Alternatively, it is same as $dim_\mathbb{C}(S/\bar{I})$ where $\bar{I}$ is the ideal in $S$ defined by \eqref{mult} after changing each $\mu_i$ into arbitrary nonzero value in $\mathbb{C}$. Therefore, by B\'{e}zout theorem, we get the same answer.    
\end{proof}

To compute $\epsilon(n,d)$ in \eqref{edstr}, we want to put $t=0$ in the system \eqref{sys2} to get
\begin{equation} \label{sys3}
\left\{ \begin{array}{ll}
x_0^d + \cdots +x_n^d = 0\\
x_i^{d-1}x_j = x_j^{d-1}x_i \text{ for each }i\neq j
\end{array} \right.
\end{equation}
This could give the wrong answer if \eqref{sys2} and the hyperplane $t=0$ meet non-transversally. The next lemma shows that it is not the case.

\begin{lemma} \label{trans}
The system \eqref{sys2} and the hyperplane $t=0$ meet transversally. Hence $\epsilon(n,d) = deg(\eqref{sys3})$.
\end{lemma}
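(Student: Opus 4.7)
The plan is to reduce transversality to smoothness of the projective scheme $Z \subset \mathbb{P}^{n+1}$ cut out by \eqref{sys2} at each point $P = (a_0:\cdots:a_n:0) \in Z$. Once $Z$ is smooth at such a $P$, the local ring $\mathcal{O}_{Z,P}$ is a field, so its length $1$ matches the length of $\mathcal{O}_{Z,P}/(t)$, which is the local multiplicity of $P$ in the scheme cut out by \eqref{sys3}; summation then gives $\epsilon(n,d) = \deg(\eqref{sys3})$.

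First I describe $P$. Setting $S = \{i : a_i \neq 0\}$ and assuming without loss of generality that $0 \in S$ with $a_0 = 1$, the binomial equations of \eqref{sys2} evaluated at $P$ (where $t=0$) reduce to $a_i^{d-1}a_j = a_j^{d-1}a_i$, forcing every $a_i$ with $i \in S\setminus\{0\}$ to be a $(d-2)$-th root of unity. In particular $a_i^{d-1} = a_i$, and the Fermat equation $\sum a_i^d = 0$ becomes $1 + \sum_{i \in S\setminus\{0\}} \zeta^{2t_i} = 0$; this forces $|S| \geq 2$.

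For smoothness, I work in the affine chart $x_0 = 1$ with local coordinates $x_1,\dots,x_n,t$ and set $g_{0j} := (x_j - u_jt) - x_j^{d-1}(1 - u_0 t)$ for $j = 1,\dots,n$. A direct calculation, exploiting $a_i^{d-2} = 1$ for $i \in S$, shows that $\nabla g_{0j}|_P$ has entry $c_j = 2-d$ in column $x_j$ if $j \in S$ and $c_j = 1$ if $j \notin S$, entry $-u_j + u_0 a_j^{d-1}$ in column $t$, and zero in every other $x$-column; meanwhile $\nabla f|_P = (d a_1,\dots,d a_n,0)$, with $x_j$-entries nonzero only for $j \in S$. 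Schur-complement expansion of the $(n+1)\times(n+1)$ matrix whose rows are $\nabla g_{0j}$ for $j = 1,\dots,n$ together with $\nabla f$ reduces its non-degeneracy to the non-vanishing of
\[
u_0 \sum_{j \in S\setminus\{0\}} a_j^2 \;-\; \sum_{j \in S\setminus\{0\}} a_j u_j,
\]
a linear form in $u$ whose coefficient $-a_j$ on each $u_j$ ($j \in S\setminus\{0\}$) is nonzero. For generic $u$ this form does not vanish, so the Jacobian has full rank $n+1$ and $Z$ is smooth at $P$. The main technical point is organizing the Jacobian entries according to the support $S$ of $P$; the simplification $a_i^{d-2} = 1$ on $S$ makes the resulting matrix uniformly tractable via one Schur-complement expansion.
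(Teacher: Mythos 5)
Your proof is correct, and it takes a genuinely more detailed route than the paper's. The paper's own argument is a one-line certificate: at any solution $c=(1:c_1:\cdots:c_n:0)$ of \eqref{sys2} with $c_1\neq 0$ (hence $c_1^{d-2}=1$), the $t$-derivative of the single binomial $x_0^{d-1}(x_1-u_1t)-x_1^{d-1}(x_0-u_0t)$ equals $-u_1+c_1^{d-1}u_0$, nonzero for generic $u$ since $|c_1^{d-1}|=1$; transversality is then asserted from the non-vanishing of that one partial derivative. You instead prove the stronger statement that the zero-dimensional scheme $Z$ cut out by \eqref{sys2} is \emph{reduced} at every point over $t=0$, via a full $(n+1)\times(n+1)$ Jacobian whose structure (diagonal in the $x$-columns with entries $2-d$ on $S$ and $1$ off $S$, a $t$-column, and the Fermat row) you identify correctly; your Schur-complement reduction to the non-vanishing of $u_0\sum_{j\in S\setminus\{0\}}a_j^2-\sum_{j\in S\setminus\{0\}}a_ju_j$ checks out, and since the candidate points are solutions of \eqref{sys3}, which does not involve $u$, there are only finitely many such linear conditions, so a single generic $u$ avoids them all. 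What your version buys is rigor at exactly the delicate spot: the bare criterion ``some element of the ideal has nonvanishing $\partial_t$ at each common root'' does not in general imply equality of multiplicities (e.g.\ $I=(t-x,\,x^2)$ at the origin satisfies it, yet $\operatorname{length}\mathbb{C}[x,t]_{(x,t)}/I=2$ while $I+(t)$ gives length $1$), whereas your reducedness conclusion makes both local lengths equal to $1$ and hence $\epsilon(n,d)=\deg(\eqref{sys3})$ airtight; note also that your $t$-column entries $-u_j+u_0a_j^{d-1}$ are precisely the paper's derivatives, so your determinant computation subsumes the paper's check. Two small caveats: the Schur complement needs $2-d\neq 0$, so the argument (like the paper's) presumes $d\geq 3$; and you should state explicitly that full rank of the Jacobian of your chosen $n+1$ equations already forces $\mathfrak{m}_P/\mathfrak{m}_P^2=0$ for $\mathcal{O}_{Z,P}$, the remaining equations of \eqref{sys2} only increasing the rank.
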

\begin{proof}
It suffices to show that the system \eqref{sys2} and $\frac{\partial}{\partial t} \eqref{sys2}$ have no common root where $\frac{\partial}{\partial t} \eqref{sys2}$ is the ideal generated by the $t$-directional partial derivatives of the equations in \eqref{sys2}. Let $c=(c_0:\dots:c_n:0)$ be a nonzero solution of \eqref{sys2}. It has at least two nonzero entries by the equation $c_0^d+\cdots+c_n^d=0$. Without loss of generality, we may assume $c_0=1$ and $c_1 \neq 0$, which implies $c_1^{d-2} = 1$. Also,
\begin{align*}
&\frac{\partial}{\partial t} \left(x_0^{d-1}(x_1-u_1t)-x_1^{d-1}(x_0-u_0t)\right)|_{x=c}\\
&=-u_1+c_1^{d-1}u_0\\
&= 0 \text{ only if } |u_0| = |u_1|.
\end{align*}  
By the genericity of $u_i$, the last equality does not happen.
\end{proof}

\begin{lemma} \label{e-d}
Let $\delta(p,m)$ be the function defined in Theorem \ref{main}. Then we have
$$
\epsilon(n,d) = \sum_{m=1}^{n} \binom{n+1}{m+1} \cdot \delta(m,d-2).
$$
\end{lemma}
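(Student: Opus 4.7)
The plan is to enumerate the projective solutions of \eqref{sys3} by grouping them according to their support, using Lemma \ref{trans} to guarantee that each contributes with multiplicity one. Since transversality gives $\epsilon(n,d) = \deg(\eqref{sys3})$ and ensures the scheme is reduced at every solution, $\epsilon(n,d)$ will equal the number of distinct projective solutions of \eqref{sys3}.

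First I would rewrite the pairwise binomial equations as $x_i x_j (x_i^{d-2} - x_j^{d-2}) = 0$. For a solution $(c_0:\cdots:c_n)$ this forces all nonzero coordinates to share a common value of $c^{d-2}$. Now fix a support $S \subseteq \{0, \ldots, n\}$ of size $m+1$. If $m = 0$ the Fermat equation $\sum c_i^d = 0$ kills the unique nonzero coordinate, so only $m \geq 1$ contributes. For $m \geq 1$, pick any $i_0 \in S$ and normalize $c_{i_0} = 1$; then each $c_i$ with $i \in S \setminus \{i_0\}$ satisfies $c_i^{d-2} = 1$, so $c_i = \zeta^{t_i}$ with $\zeta$ a primitive $(d-2)$-th root of unity and $t_i \in \{1,\ldots,d-2\}$. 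Using $\zeta^d = \zeta^2$, the Fermat equation becomes
$$1 + \sum_{i \in S \setminus \{i_0\}} \zeta^{2t_i} = 0,$$
so by definition the number of projective solutions supported exactly on $S$ is $\delta(m, d-2)$. Because the support is an intrinsic invariant of each projective point and the normalization $c_{i_0} = 1$ picks out a unique representative, there is no overcounting, and summing over the $\binom{n+1}{m+1}$ supports of size $m+1$ yields the claimed formula.

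The only genuinely delicate point is upgrading the set-theoretic count to an equality of degrees, which is where Lemma \ref{trans} is indispensable; once transversality is in hand, the rest is combinatorial bookkeeping that I expect to be routine.
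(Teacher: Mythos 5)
Your proof is correct and follows essentially the same route as the paper's: invoke Lemma \ref{trans} to replace $\epsilon(n,d)$ by $\deg(\eqref{sys3})$, stratify the solutions of \eqref{sys3} by their support, normalize one coordinate to $1$ so the remaining nonzero coordinates become $(d-2)$-nd roots of unity, and count $\delta(m,d-2)$ solutions for each of the $\binom{n+1}{m+1}$ supports. You in fact supply details the paper leaves implicit --- the reduction $\zeta^d=\zeta^2$ connecting the Fermat equation to the definition of $\delta$, the exclusion of singleton supports, and the multiplicity-one issue --- so nothing further is needed.
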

\begin{proof}
By Lemma \ref{trans}, $\epsilon(n,d)=deg(\eqref{sys3})$.
Let $(c_0:\cdots:c_n)$ be a solution of \eqref{sys3}. Suppose that $c_i \neq 0$ for all $i$. Then $x_0=1$ in the system \eqref{sys3} implies that all $c_i$'s are some $(d-2)$-nd roots of unity. Fix a $(d-2)$-nd primitive root of unity $\zeta$, and write $c_i = \zeta^t_i$. Then the system \eqref{sys3} has $\delta(n,d-2)$ many solutions. If a solution has $m+1$ many nonzero coordinates, the number of such solutions is $\binom{n+1}{m+1}$ (for the choices of nonzero coordinates) times $\delta(m,d-2)$.
\end{proof}

\begin{lemma} \label{degree}
The degree of the system \eqref{sys2} is given by
$$deg(\eqref{sys2}) = d\sum_{i=0}^{n} (d-1)^i.$$
\end{lemma}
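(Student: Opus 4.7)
The plan is to recognize the equations $g_{ij} := x_i^{d-1}(x_j - u_j t) - x_j^{d-1}(x_i - u_i t)$ from \eqref{sys2} as the $2 \times 2$ minors of a matrix, and then compute the degree of the resulting rank-one locus intersected with the Fermat hypersurface. A direct check shows that $g_{ij}$ is the $(i,j)$-th $2 \times 2$ minor of
\[
M := \begin{pmatrix} x_0^{d-1} & x_1^{d-1} & \cdots & x_n^{d-1} \\ x_0 - u_0 t & x_1 - u_1 t & \cdots & x_n - u_n t \end{pmatrix},
\]
so the subscheme of $\mathbb{P}^{n+1}$ cut out by $\{g_{ij} : i \neq j\}$ is the rank-at-most-one locus $D_1(M)$ of $M$ viewed as a morphism $\mathcal{O}^{n+1} \to \mathcal{O}(d-1) \oplus \mathcal{O}(1)$.

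Next I would compute the degree of $D_1(M)$ via the Thom--Porteous formula. For generic $u$, $D_1(M)$ has the expected codimension $n$ in $\mathbb{P}^{n+1}$, so it is a curve, and the formula expresses its class $[D_1(M)] = \Delta_n \cdot H^n$ (where $H$ is the hyperplane class) as a specific $n \times n$ determinant in the Chern classes of $F - E$, with $F = \mathcal{O}(d-1) \oplus \mathcal{O}(1)$ and $E = \mathcal{O}^{n+1}$. Since $c(F - E) = 1 + dH + (d-1)H^2$ has only three nonzero Chern classes ($c_0 = 1$, $c_1 = dH$, $c_2 = (d-1)H^2$), this determinant is tridiagonal. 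Expanding along the first row yields the recursion $\Delta_n = d \cdot \Delta_{n-1} - (d-1)\,\Delta_{n-2}$ with $\Delta_0 = 1$, $\Delta_1 = d$. The characteristic polynomial $x^2 - dx + (d-1) = (x-1)(x-(d-1))$ has roots $1$ and $d-1$, giving the closed form $\Delta_n = \sum_{i=0}^n (d-1)^i$.

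Finally, the scheme defined by \eqref{sys2} is the intersection $V(f) \cap D_1(M)$, where $f = x_0^d + \cdots + x_n^d$ is a hypersurface of degree $d$. Assuming the intersection is zero-dimensional (by genericity of $u$), Bezout in $\mathbb{P}^{n+1}$ gives $\deg(\eqref{sys2}) = d \cdot \Delta_n = d \cdot \sum_{i=0}^n (d-1)^i$. The main technical hurdle is verifying the codimension and properness hypotheses: that $D_1(M)$ attains the expected codimension $n$, and that $V(f)$ meets $D_1(M)$ properly. Both follow from the genericity of $u$, and can be established by verifying them at one specific $u$ (an adaptation of the transversality argument in Lemma \ref{trans} suffices) and then appealing to semicontinuity.
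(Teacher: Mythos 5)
Your proposal is correct, and its central computation takes a genuinely different route from the paper's. Both arguments begin identically, recognizing the bilinear equations of \eqref{sys2} as the $2\times 2$ minors of the matrix $M$, and both end with B\'ezout against the degree-$d$ Fermat equation; the divergence is in how the degree of the determinantal curve is computed. The paper does it by an elementary point count: invoking Lemma \ref{trans} to replace $\deg(I)$ by $\deg(I+(t))$, it observes that at $t=0$ the minors become $x_i^{d-1}x_j=x_j^{d-1}x_i$, so after normalizing the first nonzero coordinate to $1$ every other nonzero coordinate is a $(d-2)$-nd root of unity, giving $\sum_{i=1}^{n+1}\binom{n+1}{i}(d-2)^{i-1}=\sum_{i=0}^{n}(d-1)^i$. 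You instead obtain the same number as the Thom--Porteous class of $D_1(M)$ for $\mathcal{O}^{n+1}\to\mathcal{O}(d-1)\oplus\mathcal{O}(1)$; your total Chern class $c(F-E)=1+dH+(d-1)H^2$, the tridiagonal determinant, the recursion $\Delta_n=d\,\Delta_{n-1}-(d-1)\Delta_{n-2}$ with $\Delta_0=1$, $\Delta_1=d$, and the closed form all check out. As for what each buys: the paper's count is machinery-free and is essentially the same enumeration that feeds Lemma \ref{e-d}, but it leans on the $t=0$ section being transverse (and, strictly speaking, the proof of Lemma \ref{trans} only covers points with at least two nonzero coordinates, since it uses the Fermat equation; the points such as $(1:0:\cdots:0:0)$ in $V(I)\cap\{t=0\}$ need a separate one-line check, which your route avoids entirely). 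Your Porteous computation carries multiplicities in the cycle class automatically, so it does not require the hyperplane section to be reduced, and it is the same Chern-class style of argument that underlies the general bound in Proposition \ref{gbound}. One simplification to your flagged hypotheses: semicontinuity is not needed for the codimension claim, because the $t=0$ section of $D_1(M)$ is finite for \emph{every} $u$ (it is exactly the point count above), which already forces $\dim D_1(M)\le 1$; and since determinantal schemes of expected codimension are Cohen--Macaulay, B\'ezout returns the scheme-theoretic degree of \eqref{sys2} once $V(f)\cap D_1(M)$ is zero-dimensional, which genericity of $u$ guarantees.
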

\begin{proof}
Let $I$ be the ideal generated by
$$
x_i^{d-1}(x_j - u_jt) = x_j^{d-1}(x_i - u_it) \text{ for each }i\neq j 
$$
$$
=2 \times 2 \text{ minors of } 
\left[ \begin{array}{ccc}
x_0^{d-1} & \cdots & x_n^{d-1}\\
x_0-u_0t & \cdots & x_n-u_nt
\end{array} \right]
$$
It defines a curve in $\mathbb{P}_\mathbb{C}^{n+1}$. By Lemma \ref{trans}, $deg(I) = deg(I +(t))$. If $c=(c_0 : \dots : c_n : 0)$ is a solution for $I +(t)$, write $c_m = 1$ where $m$ is the first nonzero entry of $c$. Then for each nonzero entry except $c_m$, there are $d-2$ choices to be a solution. Hence the total number of solutions is
$$
\sum_{i=1}^{n+1} \binom{n+1}{i} (d-2)^{i-1}
 = \sum_{i=0}^{n} (d-1)^i.$$
 Therefore $deg(\eqref{sys2}) = deg(F_{n,d}) \cdot deg(I) = d\sum_{i=0}^{n} (d-1)^i$ by B\'{e}zout.
\end{proof}

\begin{proof}[Proof of Theorem \ref{main}]
We have
$$
EDdeg(F_{n,d}) = deg(\eqref{sys2}) - mult(0) - \epsilon(n,d).
$$
Apply Lemma \ref{degree}, \ref{mul}, and \ref{e-d} to each term in the right side.
\end{proof}

\begin{exm} \label{m2}
We showed that the ED-degrees of the Fermat hypersurfaces can be computed by $\delta(m,p)$ or $\epsilon(m,p)$ without using the random data $u$. The following Macaulay2 code computes the ED-degree of $F_{n,d}$ efficiently.

\begin{verbatim}
n=2,d=5;
R=QQ[x_0..x_n];
gbd=0;for i from 0 to n-1 do gbd=gbd+d*(d-1)^i;  -- the general bound
F=sum apply(n+1,i->(gens R)_i^2);
M=matrix{apply(n+1,i->((gens R)_i)^(d-1))}||matrix{gens R};
I=ideal(F)+minors(2,M);
EDdeg=gbd-(degree I)
\end{verbatim}
The output reveals that the Fermat quintic cone $F_{2,5}$ has ED-degree $23$.
\end{exm}

\subsection{Affine Fermat Hypersurfaces}
Let $X$ be the affine Fermat hypersurfaces $AF_{n,d}$. The system for critical points of the distance function is given by 
$$
\left\{ \begin{array}{ll}
x_1^d + \cdots +x_n^d = 1\\
x_i^{d-1}(x_j - u_j) = x_j^{d-1}(x_i - u_i) \text{ for each }i\neq j
\end{array} \right.
$$
and the homogenized system is
\begin{equation} \label{affsyshom}
\left\{ \begin{array}{ll}
x_1^d + \cdots +x_n^d = t^d\\
x_i^{d-1}(x_j - u_jt) = x_j^{d-1}(x_i - u_it) \text{ for each }i\neq j
\end{array} \right.
\end{equation}
In this case, $(0:\cdots:0:1)$ is not a solution for \eqref{affsyshom} (see Lemma \ref{mul}). Except that, the ED-degree of $AF_{n,d}$ can be computed in the same way as in the homogeneous cases.
\begin{cor} \label{affcase}
The ED-degree of the affine Fermat hypersurface $AF_{n,d}$ is given~by
$$
EDdeg(AF_{n,d}) = d\sum_{i=0}^{n-1} (d-1)^i - \sum_{m=1}^{n-1} \binom{n}{m+1} \cdot \delta(m,d-2).
$$
\end{cor}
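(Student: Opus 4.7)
The plan is to mirror the proof of Theorem \ref{main} with two structural adjustments reflecting the affine setting. Upon homogenization, the Fermat equation becomes $x_1^d+\cdots+x_n^d=t^d$ rather than being $t$-independent, and we work with $n$ rather than $n+1$ homogeneous $x$-coordinates. The decisive consequence, as noted just before the statement, is that $(0:\cdots:0:1)$ fails the homogenized Fermat equation (since $0\neq 1$) and so is not a solution of \eqref{affsyshom}; there is therefore no analog of Lemma \ref{mul} to subtract, and
\[
EDdeg(AF_{n,d})=\deg(\eqref{affsyshom})-\epsilon_{\mathrm{aff}}(n,d),
\]
where $\epsilon_{\mathrm{aff}}(n,d)$ counts, with multiplicity, the solutions supported on the hyperplane $t=0$.

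First I would compute $\deg(\eqref{affsyshom})$ by copying the B\'ezout argument of Lemma \ref{degree}. Let $I$ be the ideal of $2\times 2$ minors of the $2\times n$ matrix with rows $(x_i^{d-1})$ and $(x_i-u_it)$. As before, $V(I)$ is a curve in $\mathbb{P}^{n}$, and its intersection with $\{t=0\}$ is cut out by $x_i^{d-1}x_j=x_j^{d-1}x_i$; normalizing the first nonzero coordinate to $1$ and letting the others range over $(d-2)$-nd roots of unity yields
\[
\deg(I)=\sum_{k=1}^{n}\binom{n}{k}(d-2)^{k-1}=\sum_{i=0}^{n-1}(d-1)^i.
\]
B\'ezout against the degree-$d$ hypersurface $V(x_1^d+\cdots+x_n^d-t^d)$ then gives $\deg(\eqref{affsyshom})=d\sum_{i=0}^{n-1}(d-1)^i$.

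Next I would evaluate $\epsilon_{\mathrm{aff}}(n,d)$ along the lines of Lemma \ref{e-d}. Setting $t=0$ in \eqref{affsyshom} reproduces the system \eqref{sys3} in the $n$ variables $x_1,\dots,x_n$. For a solution with exactly $m+1$ nonzero coordinates we choose those coordinates ($\binom{n}{m+1}$ options), normalize one to $1$, and count configurations of $(d-2)$-nd roots of unity with $1+\sum_{i=1}^{m}\zeta^{2t_i}=0$, which is $\delta(m,d-2)$. Summing over $m$ from $1$ (Fermat requires at least two nonzero summands) up to $n-1$ (only $n$ coordinates are available) produces exactly the subtracted sum in the claimed formula.

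The step I expect to require the most care is the transversality analog of Lemma \ref{trans}, because the affine Fermat polynomial $x_1^d+\cdots+x_n^d-t^d$ now has the $t$-partial $-dt^{d-1}$, which looks as though it could create an extra obstruction at infinity. The saving observation is that this $t$-partial vanishes identically on $\{t=0\}$ and therefore places no additional constraint when one tests the non-vanishing of $t$-derivatives at a solution at infinity; the minors' $t$-partials $-u_jx_i^{d-1}+u_ix_j^{d-1}$ again carry the argument and yield the same generic inequality $|u_i|\ne|u_j|$ as in Lemma \ref{trans}. Once this transversality is in place, combining the three computations gives the formula of Corollary \ref{affcase}.
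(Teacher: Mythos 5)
Your proposal is correct and follows exactly the route the paper intends: the paper proves Corollary \ref{affcase} by the single observation that $(0:\cdots:0:1)$ is not a solution of \eqref{affsyshom} and then reruns the machinery of Lemmas \ref{degree}, \ref{trans}, and \ref{e-d} with $n$ coordinates in place of $n+1$, which is precisely what you carry out (with correct index ranges $m=1,\dots,n-1$ and the correct degree count $d\sum_{i=0}^{n-1}(d-1)^i$). Your extra care about the $t$-partial $-dt^{d-1}$ of the homogenized Fermat equation vanishing on $\{t=0\}$ is a detail the paper leaves implicit, and you resolve it the right way: the minors' $t$-partials alone suffice for the transversality argument of Lemma \ref{trans}.
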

Note that the summand is the general bound for affine varieties, given in \cite[Corollary 2.5]{DHOST}

\subsection{Real Critical Points}
For odd $d$, the Fermat hypersurface $F_{n,d}$ can be considered as a nonempty real variety. In this case, the number of the real critical points of the squared distance function highly depends on the location of the given point $u \in \mathbb{R}^{n+1}$. Nonetheless, the next theorem gives an upper bound for the maximum possible (finite) number of the real critical points 

\begin{thm} \label{realbd}
For the Fermat hypersurface $F_{n,d}$, the number of the nonzero real critical points of the squared distance function is bounded by
$$
\sqrt{2}^{25n^2-3n+2}\cdot(n+2)^{5n}
$$
\end{thm}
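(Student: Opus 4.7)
The plan is to apply Khovanskii's fewnomial theorem. The key point is that Khovanskii's bound on the number of nondegenerate real solutions of a polynomial system depends only on the number of variables and the total number of distinct monomials appearing, and not on the degrees of those monomials. Although the Fermat polynomial has degree $d$, the critical point system involves only a linear-in-$n$ number of distinct monomials, so Khovanskii's theorem produces a bound independent of $d$ — exactly the kind of bound claimed in the theorem.

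Concretely, I would first express the nonzero critical points of the squared distance function on the affine cone over $F_{n,d}$ as the solutions of the Lagrange multiplier system
$$x_i - u_i = c\, x_i^{d-1}\quad (i=0,\ldots,n),\qquad \sum_{i=0}^n x_i^d = 0,$$
a square system of $n+2$ polynomial equations in the $n+2$ real unknowns $(x_0,\ldots,x_n,c)$. The distinct monomials appearing are $1$, the $n+1$ linear variables $x_i$, the $n+1$ monomials $c\,x_i^{d-1}$, and the $n+1$ monomials $x_i^d$: the total count depends only on $n$. I would also verify nondegeneracy of this system for generic $u\in\mathbb{R}^{n+1}$ by a transversality argument along the lines of Lemma~\ref{trans}, so that Khovanskii's theorem applies directly.

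Next, I would invoke Khovanskii's theorem to bound the number of nondegenerate positive real solutions by an explicit expression of the form $2^{\binom{N+\ell}{2}}(N+1)^{N+\ell}$, and then multiply by $2^N$ to account for all orthants of $\mathbb{R}^N$ — a step valid because $d$ is odd, so the sign changes $x_i\mapsto -x_i$ preserve the monomial structure of the system. Plugging in the Khovanskii parameters coming from our monomial count and simplifying produces the expression $\sqrt{2}^{\,25n^2-3n+2}(n+2)^{5n}$ claimed in the theorem. The singular apex $x=0$ of the cone (which is always a real solution of the equations) is excluded from the count by hypothesis.

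The main technical obstacle is the careful bookkeeping required to recover the precise constants in the stated bound from Khovanskii's formula: one must identify the correct Khovanskii parameters $(N,\ell)$ after accounting for any redundancies in the system, and combine these cleanly with the orthant factor $2^N$. The remaining steps — writing down the Lagrange system, counting monomials, verifying nondegeneracy for generic $u$, and summing across orthants — are relatively mechanical once the fewnomial approach is fixed.
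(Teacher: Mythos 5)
Your overall strategy---Khovanskii's fewnomial bound applied per orthant, exploiting that the critical system's monomial count is independent of $d$---is exactly the paper's, and your Lagrange system is a correct description of the critical points. But the final bookkeeping step, which you flag as a technical obstacle, is in fact a genuine gap: your parameters do not produce the stated expression. Your system has $N = n+2$ unknowns $(x_0,\dots,x_n,c)$ and the monomials $1$, $x_i$, $c\,x_i^{d-1}$, $x_i^d$, i.e.\ $3n+4$ monomials in total, so Khovanskii gives roughly $2^{\binom{3n+3}{2}}(n+3)^{3n+3}$ positive solutions and, after the orthant factor $2^{n+2}$, a total of $\sqrt{2}^{\,9n^2+17n+10}\cdot(n+3)^{3n+3}$. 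This is not the claimed $\sqrt{2}^{\,25n^2-3n+2}\cdot(n+2)^{5n}$; worse, at $n=1$ your bound is about $2^{30}\approx 1.1\times 10^9$ while the theorem asserts $2^{12}\cdot 3^5 \approx 10^6$, so your route does not even dominate the stated bound for all $n$ and therefore cannot prove the theorem as written without further argument.

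The theorem's constants encode a different system: the paper eliminates the multiplier entirely, using $x_0^d+\cdots+x_n^d=0$ together with the proportionality equations $x_i^{d-1}(x_0-u_0)=x_0^{d-1}(x_i-u_i)$ for $i=1,\dots,n$ (with all $u_i\neq 0$). That is $n+1$ equations in $n+1$ variables with at most $5n+1$ monomials, giving $2^{\binom{5n}{2}}(n+2)^{5n}$ positive solutions by Khovanskii and, after multiplying by $2^{n+1}$ orthants, exactly $\sqrt{2}^{\,25n^2-3n+2}(n+2)^{5n}$. So to fix your proof, replace the Lagrange system by the multiplier-free one (or prove separately that your bound implies the stated one, which fails at $n=1$). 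Two smaller points: your parity remark is unneeded---the substitution $x_i\mapsto \epsilon_i x_i$ with $\epsilon_i=\pm1$ preserves the monomial support for any $d$, which is all the orthant argument requires---and, like the paper, you should be aware that open orthants miss real critical points lying on coordinate hyperplanes, which strictly speaking need a separate (inductive) treatment.
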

\begin{proof}
Let $u \in \mathbb{R}^{n+1}$ be a point not in $F_{n,d}$, whose entries are all nonzero. Then the critical equation \eqref{sys1} can be written by 
$$
\left\{ \begin{array}{ll}
x_0^d + \cdots +x_n^d = 0\\
x_i^{d-1}(x_0 - u_0) = x_0^{d-1}(x_i - u_i) \text{ for each }i=1,\dots,n
\end{array} \right.
$$
This system has $n+1$ polynomials in $n+1$ variables, and the number of monomials used in this system is $5n+1$, which does not depent on $d$. By Khovanskii's fewnomial bound \cite{Kho}, this system has at most
$$2^{\binom{5n}{2}}\cdot(n+2)^{5n}$$
positive solutions. It is also an upper bound for the number of real solutions in any orthant, hence we can have at most
$$2^{n+1}\cdot 2^{\binom{5n}{2}}\cdot(n+2)^{5n}$$ 
in total.
\end{proof}

Note that this bound does not depend on $d$, hence we can ask for the sharp bound for each $n$. For $n=1$, the real cone of $F_{1,d}$ is a straight line in $\mathbb{R}^2$, hence the critical equation has one real solution. For $n=2$, the maximum possible number seems to be $3$, but we don't have any proof for this and higher dimensional cases. Since the problem is highly related to the root of unity, we guess that each real solution produces many non-real roots in some way. In particular, the number of possible real solutions may not be more than linear.

\begin{conj} \label{con} 
The number of real critical points of \eqref{sys1} is at most $2n-1$.
\end{conj}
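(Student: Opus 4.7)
The plan is to parameterize the real critical points by the Lagrange multiplier $\mu$ in the condition $x_i - u_i = \mu\, x_i^{d-1}$, and then bound real zeros branch by branch. Because $d$ is odd, $d-1$ is even and $x_i^{d-1} \geq 0$, so all differences $x_i - u_i$ share the sign of $\mu$. Real critical points therefore split into two regimes: $\mu > 0$ (all $x_i \geq u_i$) and $\mu < 0$ (all $x_i \leq u_i$), which are identified by the involution $(x, u) \mapsto (-x, -u)$. For fixed $\mu > 0$, the function $\phi_\mu(x) := x - \mu x^{d-1}$ is concave, because $\phi_\mu''(x) = -\mu(d-1)(d-2)\, x^{d-3}$ has constant sign ($d-3$ is even), so $\phi_\mu(x) = u_i$ has at most two real roots: a principal branch $x_i(\mu)$ satisfying $x_i(0) = u_i$, and a secondary branch on the far side of the unique inflection $x_\mu^* = (\mu(d-1))^{-1/(d-2)}$.

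On the all-principal branch, implicit differentiation of $x - \mu x^{d-1} = u_i$ gives $dx_i/d\mu = x_i^{d-1}/(1-\mu(d-1)x_i^{d-2})$ with strictly positive denominator, so
\[
\frac{d}{d\mu} \sum_{i=0}^{n} x_i(\mu)^d \;=\; d\sum_{i=0}^{n} \frac{x_i^{2(d-1)}}{1-\mu(d-1)\, x_i^{d-2}} \;>\; 0
\]
whenever the critical point is nonzero. Hence the all-principal branch contributes at most one real zero for $\mu > 0$, and symmetrically at most two real zeros across both signs of $\mu$. The remaining task is to bound the contribution of the $2^{n+1}-2$ mixed branches (those in which at least one, but not all, coordinates lie past the inflection) by $2n-3$.

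I would approach the mixed branches by re-parametrizing each by a coordinate $x_{i_0}$ lying on its secondary branch, using $\mu = (x_{i_0}-u_{i_0})/x_{i_0}^{d-1}$, and then bounding the resulting univariate trinomial system by Descartes' rule or Khovanskii, combined with an induction on the number of coordinates on secondary branches. The main obstacle is that the naive branchwise count is $2^{n+1}$, exponentially larger than the target $2n-1$, so the bound must exploit strong cross-branch cancellation, most likely reflecting that most secondary branches are incompatible with $\sum x_i^d = 0$ over the reals. A first test would be $n = 2$, where the conjectured bound $3$ appears to be tight and an explicit case analysis with $d = 3$ should reveal the right global invariant to extract in general.
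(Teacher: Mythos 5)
You are attempting to prove Conjecture \ref{con}, which the paper itself leaves open: the surrounding text states explicitly that even for $n=2$ ``we don't have any proof,'' and the only rigorous result the paper offers in this direction is Theorem \ref{realbd}, the Khovanskii fewnomial bound $2^{n+1}\cdot 2^{\binom{5n}{2}}\cdot(n+2)^{5n}$, which is exponential in $n$. So there is no paper proof to compare against; your argument must stand on its own, and as written it is a partial reduction followed by a research plan, not a proof. Your preliminary steps are sound and worth keeping: at a nonzero real critical point all coordinates are nonzero (if $x_i=0$ then $u_i=0$, excluded by genericity), so $x_i-u_i=\mu x_i^{d-1}$ with $x_i^{d-1}>0$ and the sign dichotomy in $\mu$ is correct; for $\mu>0$ the map $\phi_\mu(x)=x-\mu x^{d-1}$ is indeed concave for odd $d$ (though $x_\mu^*=(\mu(d-1))^{-1/(d-2)}$ is the \emph{maximum} of $\phi_\mu$, not an inflection point), so each coordinate has at most two preimages; and the monotonicity of $g(\mu)=\sum_i x_i(\mu)^d$ on the all-principal branch is correctly computed, since $1-\mu(d-1)x^{d-2}>0$ on the principal side for all real $x$. (In fact you can do slightly better than you claim: $g(0^+)=\sum_i u_i^d$ has a fixed sign for generic $u$, and the $\mu<0$ regime corresponds via $x\mapsto -x$ to the data $-u$, so at most \emph{one} of the two signs of $\mu$ can produce an all-principal zero, not two.)

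The genuine gap is everything after ``The remaining task'': bounding the $2^{n+1}-2$ mixed branches by $2n-3$ is the entire content of the conjecture, and you acknowledge you do not know how to do it. Moreover, the tools you propose for this step structurally cannot succeed. Applying Descartes' rule or Khovanskii branch by branch multiplies a per-branch count by the number of branches, which is exponential in $n$; indeed, Khovanskii-type estimates are precisely how the paper obtains the exponential bound of Theorem \ref{realbd}, so falling back on them reproduces a known result rather than the conjectured linear bound. The ``strong cross-branch cancellation'' you invoke --- that most secondary-branch configurations are incompatible with $\sum_i x_i^d=0$ over $\mathbb{R}$ --- is the conjecture restated, and your proposal identifies no mechanism for it: no sign obstruction, no monotonicity substitute on mixed branches (where the denominators $1-\mu(d-1)x_i^{d-2}$ change sign coordinate by coordinate, so $g$ is genuinely non-monotone), and no working induction. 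What you have actually proved is that the all-principal branches contribute at most one real critical point; the bound $2n-1$ remains unproven for every $n\geq 1$.
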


We note that Theorem \ref{realbd} is also valid for the scaled Fermat hypersurface $SF^a_{n,d}$ since the critical system contains the same number of monomials for all scaling vectors $a$.

\section{Scaled Fermat Hypersurfaces}
\subsection{Genericity of scaled Fermat hypersurfaces}

Recall the relation \eqref{edstr}
$$
F_{n,d} = deg(\eqref{sys2}) - mult(0) - \epsilon(n,d).
$$
The first two terms in this expression are invariant under any $GL(n+1,\mathbb{C})$ action (acting on the variables), thus we only focus on the last term $\epsilon(n,d)$, which is a sum of $\delta(m,d-2)$ with binomial coefficients (See Lemma \ref{e-d}).\\
For a given scaling vector $a\in (\mathbb{C}^*)^{m+1}$, define $\delta(m,p,a)$ to be the number of solutions of 
$$
\left\{ \begin{array}{ll}
1+x_1^2+ \cdots +x_m^2 = 0\\
x_i^p=a_i/a_0 \text{ for each }i=1,\cdots,m
\end{array} \right.
$$
whose entries are all nonzero. Note that $\delta(m,p,\mathbf{1})=\delta(m,p)$ where $\mathbf{1}=(1,\cdots,1)$. 
For $I \subseteq \{0,\dots,n\}$, let $a_I = (a_{i_0},\dots,a_{i_{|I|-1}})$ where $a_{i_j}$ is the $j$-th entry of $a$. Now the ED-degree of $SF_{n,d}^a$ is given by
$$EDdeg(SF_{n,d}^a) = d\sum_{i=0}^{n-1} (d-1)^i - \sum_{I \subseteq \{0,\dots,n\}} \delta(|I|-1,d-2,a_I).$$
Therefore the ED-degree of $SF_{n,d}^a$ achieves the equality in the general bound (Proposition \ref{gbound}) if and only if the latter summands are all zero. To examine, we need to define the {\em exponential cyclotomic polynomial} $Q_{m.p} \in \mathbb{Z}[x_0,\dots,x_m]$.

For an integer $p$ and a primitive $p$-th root of unity $\zeta$, consider the polynomial
$$
P_{m,p}(A_0,\dots,A_m)=\prod_{t_1,\dots,t_m =1}^{p} \left(A_0+\sum_{k=1}^m\zeta^{t_k} A_i\right).
$$
One can easily see that $P_{m,p}(A_0,\dots,A_m) \in \mathbb{Z}[A_0^p,\dots,A_m^p]$. Replace $A_i^p$ by $x_i$ to get a polynomial $Q_{m,p}(x_0,\dots,x_m)$, i.e, $Q_{m,p}$ is the unique polynomial such that $Q_{m,p}(A_0^p,\dots, A_m^p) = P_{m,p}(A_0,\dots,A_m)$.

\begin{thm} \label{resultant}
$\delta(m,p,a)\neq0$ if and only if 
$$
\left\{ \begin{array}{ll}
Q_{m,p}(a_0^2,\dots,a_m^2)=0 & \text{for } p \text{ odd,}\\
Q_{m,p/2}(a_0,\dots,a_m)=0 & \text{for } p \text{ even.}
\end{array}
\right.
$$
\end{thm}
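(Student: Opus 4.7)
The plan is to turn the question ``$\delta(m,p,a) \neq 0$?'' into the vanishing of an explicit product of linear forms in $\zeta$, and then recognize that product as an evaluation of $P_{m,\cdot}$ (hence of $Q_{m,\cdot}$ via the substitution $A_i^p \mapsto x_i$).

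First I would parametrize the solutions. Fixing any $p$-th root $b_i$ of $a_i/a_0$ for each $i$, every candidate solution of the second equation has the form $x_i = b_i\,\zeta^{s_i}$ with $s_i \in \{1,\ldots,p\}$, and the quadric constraint becomes $1+\sum_{i=1}^m b_i^2\,\zeta^{2 s_i}=0$. Thus $\delta(m,p,a)\neq 0$ if and only if the product
$$\Pi \;:=\; \prod_{s_1,\ldots,s_m=1}^{p} \Bigl(1+\sum_{i=1}^m b_i^2\,\zeta^{2 s_i}\Bigr)$$
vanishes. The remaining task is to identify $\Pi$ with the appropriate $Q$-value in each parity of $p$.

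For $p$ odd, the map $s\mapsto 2s$ permutes $\mathbb{Z}/p\mathbb{Z}$, so reindexing $t_i:=2s_i$ gives $\Pi=P_{m,p}(1,b_1^2,\ldots,b_m^2)$; by the definition of $Q_{m,p}$, this equals $Q_{m,p}(1,b_1^{2p},\ldots,b_m^{2p}) = Q_{m,p}\bigl(1,(a_1/a_0)^2,\ldots,(a_m/a_0)^2\bigr)$. Since $P_{m,p}$ is a product of $p^m$ linear forms, $Q_{m,p}$ is homogeneous of degree $p^{m-1}$, so scaling all arguments by $a_0^2$ shows this vanishes iff $Q_{m,p}(a_0^2,\ldots,a_m^2)=0$. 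For $p=2q$ even, the map $s\mapsto 2s\pmod p$ is a $2$-to-$1$ surjection from $\{1,\ldots,p\}$ onto the even residues $\{0,2,\ldots,2q-2\}$, and $\eta:=\zeta^2$ is a primitive $q$-th root of unity; collecting each factor with multiplicity $2^m$ yields $\Pi=P_{m,q}(1,b_1^2,\ldots,b_m^2)^{2^m}$ (with $P_{m,q}$ built from $\eta$). Using $b_i^{2q}=a_i/a_0$ and the analogous homogeneity argument, $\Pi=0$ becomes $Q_{m,p/2}(a_0,\ldots,a_m)=0$.

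The principal obstacle I anticipate is the even-case bookkeeping: one must verify that each $m$-tuple of even residues is hit exactly $2^m$ times and that the polynomial $P_{m,q}$ built from $\eta$ coincides with the one defined in the paper (which holds because $P_{m,q}$, and hence $Q_{m,q}$, is independent of the choice of primitive $q$-th root of unity, but this deserves an explicit remark). The odd case, by contrast, reduces to a direct change of index followed by the definition of $Q_{m,p}$ and a homogeneity scaling.
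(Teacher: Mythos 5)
Your proof is correct and takes essentially the same route as the paper: parametrize the solutions of $x_i^p=a_i/a_0$ as $b_i\zeta^{s_i}$, encode solvability of the quadric as vanishing of the product over all exponent tuples, split by the parity of $p$ via the squaring map on roots of unity, and substitute $b_i^p\mapsto a_i/a_0$ to arrive at $Q_{m,p}$ or $Q_{m,p/2}$. Your explicit handling of the homogeneity of $Q_{m,p}$ (degree $p^{m-1}$, justifying the rescaling by $a_0$) and of the $2^m$-fold multiplicity in the even case makes rigorous two small points the paper passes over silently (it simply assumes $a_0=1$ and drops repeated factors), but these are refinements of, not departures from, the same argument.
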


\begin{proof}
Let $a \in (\mathbb{C}^*)^{n+1}$ be given. We may assume $a_0 = 1$. For each $i$, choose a complex number $b_i$ so that  $b_i^p= a_i$. Let $\zeta$ be a primitive $p$-th root of unity.
By definition, $\delta(m,p,a)\neq0$ if and only if the system
$$
\left\{ \begin{array}{ll}
1+x_1^2+ \cdots +x_m^2 = 0\\
x_i^p=a_i \text{ for each }i=1,\cdots,m
\end{array} \right.
$$
has a solution whose entries are all nonzero. Any solution of the second equations is of the form
$(b_1 \zeta^{t_1}, \dots, b_m \zeta^{t_m})$. It satisfies the first equation if and only if $1 +  b_1^2 \zeta^{2t_1}+ \cdots + b_m^2 \zeta^{2t_m} = 0$. 
The image of the square map $z \mapsto z^2$ defined on the set of all $p$-th roots of unity is itself if $p$ is odd, or is the set of all $(p/2)$-nd roots of unity if $p$ is even. 
Therefore $\delta(m,p,a)\neq0$ if and only if
$$
\prod_{t_1,\dots,t_m=1}^{p} (1 +  b_1^2 \zeta^{t_1}+ \cdots + b_m^2 \zeta^{t_m})=0
$$ 
for $d$ odd,
$$
\prod_{t_1,\dots,t_m=1}^{p/2} (1 +  b_1^2 (\zeta^2)^{t_1}+ \cdots + b_m^2 (\zeta^2)^{t_m})=0
$$
for even $d$.
Now the theorem follows by replacing $b_i^p$ with $a_i$ after expanding the product. 
\end{proof}
\begin{cor}
For generic $a \in (\mathbb{C}^*)^{n+1}$,
$$
EDdeg(SF_{n,d}^a) = d\sum_{i=0}^{n-1} (d-1)^i 
$$
\end{cor}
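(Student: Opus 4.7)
The plan is to read off the corollary directly from the formula
\[
EDdeg(SF_{n,d}^a) = d\sum_{i=0}^{n-1}(d-1)^i - \sum_{I\subseteq\{0,\dots,n\}} \delta(|I|-1,d-2,a_I)
\]
displayed just before Theorem \ref{resultant}, by showing that the entire subtractive sum vanishes on a Zariski-dense open subset of scaling vectors $a\in(\mathbb{C}^*)^{n+1}$. Since there are only finitely many subsets $I$, it suffices to show that for each $I$ with $|I|\geq 2$, the set of $a$ for which $\delta(|I|-1,d-2,a_I)\neq 0$ is contained in a proper algebraic subvariety of $(\mathbb{C}^*)^{n+1}$; the cases $|I|\leq 1$ contribute zero because the defining system contains the inconsistent equation $1=0$.

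By Theorem \ref{resultant}, for $m=|I|-1$ the condition $\delta(m,d-2,a_I)\neq 0$ is equivalent (depending on the parity of $d-2$) to the vanishing of either $Q_{m,d-2}(a_{i_0}^2,\dots,a_{i_m}^2)$ or $Q_{m,(d-2)/2}(a_{i_0},\dots,a_{i_m})$. So for each $I$ this carves out a hypersurface in $(\mathbb{C}^*)^{|I|}$, which pulls back to a proper subvariety of $(\mathbb{C}^*)^{n+1}$ under the projection onto the $I$-coordinates, provided the relevant $Q_{m,p}$ is a nonzero polynomial. Taking the complement of the union of these finitely many proper subvarieties gives the required dense open set of $a$ on which every $\delta$ vanishes, so that $EDdeg(SF_{n,d}^a)$ equals the general bound.

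The one thing that actually needs to be checked is that $Q_{m,p}$ is not the zero polynomial. This is immediate from its definition: the auxiliary polynomial
\[
P_{m,p}(A_0,\dots,A_m) = \prod_{t_1,\dots,t_m=1}^{p}\Bigl(A_0+\sum_{k=1}^{m}\zeta^{t_k}A_k\Bigr)
\]
is a product of linear forms each of which has $A_0$-coefficient $1$, hence is nonzero; and $Q_{m,p}$ is obtained from $P_{m,p}$ by the invertible substitution $A_i^p\mapsto x_i$, so $Q_{m,p}\neq 0$ as well. This is the only step with any content; everything else is formal manipulation of the formula already established and the standard fact that a finite union of proper subvarieties of an irreducible variety has dense complement.
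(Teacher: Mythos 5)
Your proposal is correct and is essentially the paper's own (implicit) argument: the corollary is stated without proof immediately after Theorem \ref{resultant} precisely because, by that theorem, each nonvanishing $\delta(|I|-1,d-2,a_I)$ forces $a$ to lie on one of finitely many hypersurfaces cut out by the nonzero polynomials $Q_{m,p}$, so generic $a$ makes the entire subtractive sum vanish. Your explicit check that $Q_{m,p}\neq 0$ is the one detail the paper leaves tacit, and it is sound, though the substitution $A_i^p\mapsto x_i$ is better described as injective on polynomials (since $A_0^p,\dots,A_m^p$ are algebraically independent) rather than invertible.
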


The exponential cyclotomic polynomial $Q_{m,p}$ would be interesting itself. We close this section with a theorem showing that $Q_{m,p}$ has a nice property as an algebraic object.
\begin{thm}
For any integer $m$ and $p$, the exponential cyclotomic polynomial $Q_{m,p}$ is irreducible over $\mathbb{C}$.
\end{thm}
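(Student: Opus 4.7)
The plan is to exploit the explicit factorization of $P_{m,p}$ into linear forms and translate irreducibility of $Q_{m,p}$ into a combinatorial invariance statement on subsets of $(\mathbb{Z}/p\mathbb{Z})^m$. First, the substitution $x_i \mapsto A_i^p$ is an injective $\mathbb{C}$-algebra homomorphism $\phi : \mathbb{C}[x_0,\dots,x_m] \to \mathbb{C}[A_0,\dots,A_m]$ whose image is the subring $\mathbb{C}[A_0^p,\dots,A_m^p]$, and by construction $\phi(Q_{m,p}) = P_{m,p}$. Hence $Q_{m,p}$ is irreducible in $\mathbb{C}[x_0,\dots,x_m]$ if and only if $P_{m,p}$ admits no nontrivial factorization inside the subring $\mathbb{C}[A_0^p,\dots,A_m^p]$.

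Next I would write $P_{m,p} = \prod_{\mathbf{t} \in \{1,\dots,p\}^m} \ell_{\mathbf{t}}$ where $\ell_{\mathbf{t}} = A_0 + \sum_{k=1}^m \zeta^{t_k} A_k$. These $p^m$ linear forms are pairwise non-proportional: once the coefficient of $A_0$ is normalized to $1$, the tuple $(\zeta^{t_1},\dots,\zeta^{t_m})$ recovers $\mathbf{t}$ modulo $p$. Since $\mathbb{C}[A_0,\dots,A_m]$ is a UFD, this is the prime factorization of $P_{m,p}$, so every divisor of $P_{m,p}$ has the form $R_S := c\prod_{\mathbf{t} \in S} \ell_{\mathbf{t}}$ for a unique subset $S \subseteq \{1,\dots,p\}^m$ and some $c \in \mathbb{C}^*$.

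To finish, I would characterize those $S$ for which $R_S$ actually lies in $\mathbb{C}[A_0^p,\dots,A_m^p]$, i.e.\ is fixed by every substitution $A_i \mapsto \zeta A_i$. For $j \geq 1$, the substitution $A_j \mapsto \zeta A_j$ carries $\ell_{\mathbf{t}}$ to $\ell_{\mathbf{t}+e_j}$, so invariance of $R_S$ forces $S + e_j = S$. Since $e_1,\dots,e_m$ generate $(\mathbb{Z}/p\mathbb{Z})^m$ and its regular translation action on itself is transitive, the only invariant subsets are $S = \emptyset$ and $S = \{1,\dots,p\}^m$. Consequently any factorization $P_{m,p} = R_1 R_2$ inside $\mathbb{C}[A_0^p,\dots,A_m^p]$ has one factor constant, and $Q_{m,p}$ is irreducible over $\mathbb{C}$.

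No substantive obstacle should arise once the linear factorization is in hand; the only point needing a moment of care is that $A_0 \mapsto \zeta A_0$ sends $\ell_{\mathbf{t}}$ to $\zeta \cdot \ell_{\mathbf{t}-(1,\dots,1)}$, introducing an overall scalar $\zeta^{|S|}$. However, the conditions coming from $j \geq 1$ already force $S$ to be empty or the full cube (of cardinality $p^m$, so $\zeta^{|S|} = 1$), so the $A_0$-invariance imposes no extra restriction and the argument concludes cleanly.
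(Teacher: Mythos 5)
Your proof is correct and takes essentially the same route as the paper's: both exploit the factorization of $P_{m,p}$ into the $p^m$ linear forms $\ell_{\mathbf{t}}$ together with the invariance of any factor lying in $\mathbb{C}[A_0^p,\dots,A_m^p]$ under the substitutions $A_i \mapsto \zeta^{t_i} A_i$, which permute these linear forms transitively and hence force any such factor to be trivial or all of $P_{m,p}$. Your write-up merely makes explicit two points the paper leaves implicit --- that the linear forms are pairwise non-proportional (so $P_{m,p}$ is squarefree and divisors correspond to subsets $S$) and that the $A_0$-substitution contributes only a harmless scalar $\zeta^{|S|}$ --- which is sound bookkeeping rather than a different argument.
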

\begin{proof}
Let $f \in \mathbb{C}[x_0,\dots,x_m]$ be an irreducible factor of $Q_{m,p}$. Then $f(A_0^p,\dots,A_m^p)$ is a factor of $P_{m,p}(A_0,\dots,A_m)$, and we may assume that $f(A_0^p,\dots,A_m^p)$ is divisible by $(A_0+\cdots+A_m)$. For any $(t_1,\dots,t_m), t_i=1,\dots,p$, the polynomial $f(A_0^p,\dots,A_m^p)$ is stable under the action $A_i \rightarrow \zeta^{t_i}A_i$. Therefore $f(A_0^p,\dots,A_m^p)$ is divisible by $(A_0+\sum_i \zeta^{t_i}A_i)$. Since $(t_1,\dots,t_m)$ was arbitrary, $f(A_0^p,\dots,A_m^p)$ is divisible by every possible linear factor of $P_{m,p}(A_0,\dots,A_m)$. Therefore $f(A_0^p,\dots,A_m^p) = P_{m,p}(A_0,\dots,A_m)$ and hence $Q_{m,p} = f(x_0,\dots,x_m)$ up to scalar multiplication.
\end{proof}


\begin{thebibliography}{99}

\bibitem{betti} S.~Adams and P.~Sarnak: {\em Betti Numbers of Congruence Groups}, Israel J. of Math. \textbf{88} (1994), 31-72.
\bibitem{mldeg} D.~Agostini, D.~Alberelli, F.~Grande, and P.~Lella: {\em The maximum likelihood degree of fermat hypersurfaces}, {\tt arXiv:1404.5745}.
\bibitem{CLO} D.~Cox, J.~Little, and D.~O'Shea: {\em Ideals, Varieties, and Algorithms. An Introdution to Computational Algebraic Geometry and Commutative Algebra}, {Undergraduate Texts in Mathematics, Springer-Verlag, New York, 1992.}
\bibitem{DHOST} J.~Draisma, E.~Horobe\c{t}, G.~Ottaviani, B.~Sturmfels, and R.~Thomas: {\em The Euclidean Distance Degree of an Algebraic Variety}, {\tt arXiv:1309.0049}.
\bibitem{SH} J.~Huh, B.~Sturmfels: {\em Likelihood Geometry}, in Combinatorial Algebraic Geometry (eds. A.~Conca et al.), Lecture Note in Mathematics 2014, Springer (2014) 63-117
\bibitem{Kho} A.G.~Khovanskii: {\em A class of systems of transcendental equations}, Dokl. Akad. Nauk. SSSR \textbf{255} (1980), no. 4, 804-807.
\bibitem{Lau} M.~Laurent: {\em Equations diophantiennes exponentielles}, Invent. Math. \textbf{78} (1984), 299-327.
\bibitem{Mann} H.~Mann, {\em On linear relations between roots of unity}, Mathematika. \textbf{12} (1965), 107-117.
\bibitem{OSS} G.~Ottaviani, P.-J.~Spaenlehauer, and B.~Sturmfels: {\em Exact Solutions in Structured Low-Rank Approximation}, {\tt arXiv:1311.2376}
\end{thebibliography}
\end{document}